\newcommand{\To}{\rightarrow}
\newcommand{\e}{\epsilon}
\newcommand{\C}{\mathbb{C}}
\newcommand{\R}{\mathbb{R}}
\newcommand{\Z}{\mathbb{Z}}
\newcommand{\N}{\mathbb{N}}
\newcommand{\F}{\mathbb{F}}
\newcommand{\Cstar}{\mathrm{C^*}}
\newcommand{\Cred}{\mathrm{C^*_r}}
\newcommand{\G}{\Gamma}
\newcommand{\rd}{\mathrm{rd}}
\newcommand{\RD}{\mathrm{RD}}
\newenvironment{myindentpar}[1]%
{\begin{list}{}%
         {\setlength{\leftmargin}{#1}}%
         \item[]%
}
{\end{list}}
\newtheorem*{theorem}{Theorem}
\newtheorem{thm}{Theorem}[section]
\newtheorem{lem}[thm]{Lemma}
\newtheorem{prop}[thm]{Proposition}
\theoremstyle{definition}
\newtheorem*{prob}{Problem}
\newtheorem*{ack}{Acknowledgments}
\begin{document}

\title{On the degree of rapid decay}
\author{Bogdan Nica}
\address{Department of Mathematics, Vanderbilt University, Nashville, TN 37240, USA}
\curraddr{Department of Mathematics and Statistics, University of Victoria, Victoria (BC), Canada V8W 3R4}
\date{\today}
\subjclass[2000]{20F99, 22D15, 46E39.}
\begin{abstract}
A finitely generated group $\G$ equipped with a word-length is said to satisfy property RD if there are $C, s\geq 0$ such that for all $n\geq 1$, we have $\|a\|\leq C (1+n)^s \|a\|_2$ whenever $a\in\C\G$ is supported on elements of length at most $n$. 

We show that, for infinite $\G$, the degree $s$ is at least $1/2$. 
\end{abstract}
\maketitle

\section{Introduction}
Let $\G$ be a finitely generated group, and fix a word-length on $\G$. We say that $\G$ has \emph{property RD} if there exist $C,s\geq 0$ such that, for all integers $n\geq 0$, we have $\|a\|\leq C (1+n)^s \|a\|_2$ whenever $a\in\C\G$ is supported on elements of length at most $n$. Here $\|\cdot\|$ denotes the operator norm coming from the regular representation of $\G$ on $\ell^2\G$. 

This property originates from Haagerup's seminal paper \cite{Haa79}, where it is shown that free groups have what we now call property RD. The explicit definition of property RD is due to Jolissaint \cite{Jol90}; a result from \cite{Jol90} we would like to quote here is the fact that groups of polynomial growth have property RD. As further examples of groups satisfying property RD, we mention: hyperbolic groups (\cite{dHa88}) and, more generally, groups that are relatively hyperbolic to subgroups having property RD (\cite{DS05}); groups acting freely on finite dimensional CAT(0) cube complexes (\cite{CR05}); cocompact lattices in $\mathrm{SL}_3(\F)$ for $\F$ a local field (\cite{RRS98}) and for $\F=\R,\C$ (\cite{Laf00}); mapping class groups (\cite{BM08}).

Property RD for a group $\G$ is relevant to the study of its reduced $\Cstar$-algebra $\Cred\G$. The first significant use of property RD is the proof by Connes and Moscovici \cite{CM90} that hyperbolic groups satisfy the Novikov conjecture. A related K-theoretic application features in the remarkable work of Lafforgue \cite{Laf02}, leading eventually to the proof that hyperbolic groups satisfy the Baum-Connes conjecture (\cite{MY02}, \cite{Laf02}). In another direction, Property RD is used in \cite{DdH99} to show that $\Cred\G$ has stable rank $1$ whenever $\G$ is a torsion-free, non-elementary hyperbolic group.

In this note, we are interested in quantifying property RD. For $s\geq 0$ consider the following property:
\begin{itemize}
\item[$(\RD^s_\bullet)$] there is $C\geq 0$ such that, for all integers $n\geq 0$, we have $\|a\|\leq C (1+n)^s \|a\|_2$ whenever $a\in\C\G$ is supported on elements of length at most $n$
\end{itemize}
Note that the satisfaction of $(\RD^s_\bullet)$ does not depend on the choice of word-length for $\G$.

Finite groups satisfy $(\RD^0_\bullet)$. Conversely, infinite groups cannot satisfy $(\RD^0_\bullet)$; this was first proved by Rajagopalan in \cite{Raj63}. Rajagopalan's result was a partial positive answer towards the ``$L^p$-conjecture'' that he formulated around 1960: if $G$ is a locally compact group, and $L^p(G)$ is closed under convolution for some $p\in (1,\infty)$, then $G$ is a compact. The complete resolution of the $L^p$-conjecture is due to Saeki \cite{Sae90}, almost 30 years after its formulation.

The following question then arises: if an infinite group satisfies $(\RD^s_\bullet)$, how small can $s$ be? The main result of this note answers this question:

\begin{theorem}
Let $\G$ be an infinite, finitely generated group. If $\G$ satisfies $(\RD^s_\bullet)$ then $s \geq \frac{1}{2}$. 
\end{theorem}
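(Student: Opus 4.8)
The plan is to reduce the statement to a single estimate: it suffices to exhibit, for each $n$, an element $a_n \in \C\G$ supported in the ball $B_n$ with $\|a_n\| \geq c\,\sqrt{n}\,\|a_n\|_2$ for some fixed $c>0$, since feeding such an $a_n$ into $(\RD^s_\bullet)$ gives $c\sqrt n \leq C(1+n)^s$ for all $n$ and hence $s \geq \tfrac12$. Two facts come for free. First, since $\G$ is infinite every sphere is nonempty, so the balls grow at least linearly: $|B_n|\geq n+1$. Second, the model case $\G=\Z$, where $a_n=\chi_{B_n}$ is the Dirichlet kernel and the resulting inequality is the Nikolskii inequality for trigonometric polynomials, both shows that $\tfrac12$ is sharp and suggests the guiding principle: build $a_n$ so that its operator norm is attained by concentrating at the top of the spectrum.

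When $\G$ is amenable this principle is realized directly by the ball. By Hulanicki's theorem the trivial representation is weakly contained in the regular representation, so $\|\chi_{B_n}\| \geq \big|\sum_{g\in B_n}1\big| = |B_n|$; as trivially $\|\chi_{B_n}\| \leq \|\chi_{B_n}\|_1 = |B_n|$, we get $\|\chi_{B_n}\| = |B_n|$ while $\|\chi_{B_n}\|_2 = |B_n|^{1/2}$. Thus $(\RD^s_\bullet)$ gives $|B_n|^{1/2}\leq C(1+n)^s$, and the linear lower bound $|B_n|\geq n+1$ yields $s\geq\tfrac12$. Note that the extremal arcsine case $\G=\Z$ lives here.

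The ball is useless for non-amenable $\G$, where $\|\chi_{B_n}\|$ is exponentially smaller than $|B_n|$, so here I would pass to the spectral side. Let $\mu$ be the uniform symmetric probability measure on a generating set, $\lambda$ the left regular representation, $h=\lambda(\mu)$ the associated self-adjoint convolver, $\rho=\|h\|<1$ its norm, and $\nu$ its spectral measure with respect to the canonical trace---a probability measure on $[-\rho,\rho]$ with $\rho\in\supp\nu$ and no atom at $\rho$ (as $\G$ is infinite). Writing $T_k$ for the Chebyshev polynomials, let $a_n\in\C\G$ correspond to the Dirichlet kernel $D_n(t)=1+2\sum_{k=1}^n T_k(t/\rho)$; since $D_n$ has degree $n$ and $h^k=\lambda(\mu^{*k})$ is supported in $B_k$, the element $a_n=D_n(h)$ is supported in $B_n$. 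Then $\|a_n\|=\max_{\supp\nu}|D_n|=D_n(\rho)=2n+1$ (the maximum of $|D_n|$ on $[-\rho,\rho]$, attained at the spectral edge $\rho\in\supp\nu$), while $\|a_n\|_2^2=\int D_n^2\,d\nu$. The whole matter thus reduces to the edge estimate $\int D_n^2\,d\nu\leq Cn$, equivalently the return-probability bound $\mu^{*2k}(e)\leq C\,\rho^{2k}/\sqrt{k}$: both express that $\nu$ is no thicker than the arcsine law at the top of the spectrum.

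This edge estimate is exactly the step I expect to be the main obstacle, and it genuinely uses that $\nu$ is the spectral measure of a random walk on a group, not an arbitrary measure. Indeed, a generic probability measure whose density blows up like $(\rho-t)^{-\theta}$ with $\tfrac12<\theta<1$ at $\rho$ gives $\|a_n\|_2^2\asymp n^{2\theta}$, and hence only the weaker bound $s\geq 1-\theta<\tfrac12$; so the arcsine thickness $\theta=\tfrac12$ is precisely the borderline one must not cross. The natural route is to control $\mu^{*2k}(e)$ by the volume growth of $\G$---in the spirit of the Coulhon--Saloff-Coste and Varopoulos estimates---so that the linear lower bound $|B_n|\geq n+1$ feeds in once more. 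The delicate point is to extract the sharp $O(k^{-1/2})$ correction uniformly, in particular when $\rho<1$, where the subexponential factor must be pinned down spectrally rather than by volume alone; this is where I expect the real work to lie.
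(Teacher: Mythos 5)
Your amenable half is complete and correct, and it is in fact the paper's own argument for that case: amenability gives $\|\chi(B_n)\|=\|\chi(B_n)\|_1=|B_n|$ (this is the Fact inside the proof of Proposition~\ref{poly}), and $|B_n|\geq n+1$ then forces $s\geq\frac12$. The gap is in the non-amenable half, and it sits exactly where you flag it: the edge estimate $\int D_n^2\,d\nu\leq Cn$, equivalently $\mu^{*2k}(e)\leq C\rho^{2k}k^{-1/2}$, is never proved, and nothing in your plan can supply it. The route you point to (Coulhon--Saloff-Coste/Varopoulos) converts volume growth into return-probability decay only in the $\rho=1$ regime; once $\rho<1$, volume growth gives no control at all on the subexponential factor $\rho^{-2k}\mu^{*2k}(e)$, which is governed by the shape of $\nu$ at the spectral edge and not by any counting in the group. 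Even your parenthetical claim that $\nu$ has no atom at $\rho$ ``as $\G$ is infinite'' is only obvious for amenable $\G$ (an atom at $1$ would produce invariant vectors in $\ell^2\G$); for non-amenable $\G$ it requires excluding $\ell^2$-eigenfunctions of $h$ at the spectral radius, which is not elementary --- and an atom, or merely edge thickness of exponent $\theta>\frac12$, kills your test elements, as your own computation shows. Sharp edge behavior of the kind you need is known only for special classes (hyperbolic groups via Gou\"ezel's local limit theorem, free products, etc.); there is no universal such bound for all infinite groups in the literature, and your standing hypothesis $(\RD^s_\bullet)$ with $s<\frac12$ enters your spectral analysis only at the final step, so it cannot be used to close the loop. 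In effect you have reduced the theorem, for non-amenable (in particular for the crucial torsion) groups, to an unproven statement that is at least as deep as the theorem itself.

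It is instructive to compare with how the paper avoids this. Its proof is uniform over all infinite groups and uses no spectral theory: from the pointwise convolution inequality $\chi(B_n)\chi(B_{n+k})\geq |B_n|\chi(B_k)$ (Lemma~\ref{one}) together with $(\RD^s_\bullet)$ and linear growth, one fixes $r$ so that $\|\chi(B_{r(k+1)})\|_2\geq 2\|\chi(B_{rk})\|_2$; this geometric growth pins down that $Z_r(\alpha)=\sum_k k^{-\alpha}\chi(B_{rk})/\|\chi(B_{rk})\|_2$ lies in $\ell^2\G$ exactly when $\alpha>\frac12$. The RD hypothesis is then used a \emph{second} time, in the delocalized form $(\RD^t)$ with $s<t<\frac12$ (Lemma~\ref{laf}), to make $Z_r(\alpha)$ a bounded operator for $\alpha>\frac12+t$, and the Saeki-style inequality $Z_r(\alpha)Z_r(\beta)\geq\frac{1}{\alpha+\beta-1}Z_r(\alpha+\beta-1)$ (Lemma~\ref{two}) then places outside--$\ell^2$ sums inside $\ell^2\G$, a contradiction. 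The structural difference is that the paper exploits the RD hypothesis itself to build the objects whose interaction is impossible, rather than attempting --- as you do --- to exhibit unconditionally a test element with large ratio $\|a\|/\|a\|_2$, which is precisely what forces you into unproven random-walk asymptotics at the spectral edge.
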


This result is sharp, since $(\RD^s_\bullet)$ for $s=\frac{1}{2}$ is satisfied by all virtually-$\Z$ groups.

We do not know how to dismiss the main theorem as being trivial, so we settle for the next best thing: an elementary proof (given in Section~\ref{infinite}), inspired by Saeki's solution to the $L^p$-conjecture. Rajagopalan's proof from \cite{Raj63}, using structural results on so-called $H^*$-algebras, does not seem flexible enough to be adaptable here. 

It should be pointed out that the main theorem is interesting for infinite torsion groups only, much like the $L^p$-conjecture. Indeed, if $\G$ contains an infinite cyclic subgroup then the main result is a mere observation (cf. Prop.~\ref{poly} and Prop.~\ref{her}). We are therefore accounting here for the possibility that infinite torsion groups with property RD might exist - a possibility which seems completely open at this time.

\begin{ack} I thank Gennadi Kasparov and Guoliang Yu for financial support during the summer of 2009. I also thank the referee for some useful comments.
\end{ack}

\section{Preliminaries}
Throughout this paper, groups are assumed to be finitely generated and equipped with a word-length. The choice of word-length is irrelevant for the discussion herein.

\subsection{Notation}For two numerical functions $f,g:\N\To\N$, we write $f(n)\preccurlyeq g(n)$ to mean that there is $C\geq 0$ such that $f(n)\leq Cg(n)$ for all $n\in \N$; we write $f(n) \asymp g(n)$ whenever we have both $f(n)\preccurlyeq g(n)$ and $g(n)\preccurlyeq f(n)$, i.e., there are $C\geq c\geq 0$ such that $cg(n)\leq f(n)\leq Cg(n)$ for all $n\in \N$.

\subsection{A reformulation of property RD}\label{version} An equivalent definition of property RD is the following: a group $\G$ is said to satisfy property RD if there are $C,s\geq 0$ such that $\|a\|\leq C\|a\|_{2,s}$ for all $a\in\C\G$, where
\[\big\|\sum a_g g\big\|_{2,s}:= \big\|\sum a_g(1+|g|)^s g\big\|_2=\sqrt{\sum |a_g|^2(1+|g|)^{2s}}.\]
Correspondingly, we quantify by considering, for $s\geq 0$, the following property:
\begin{itemize}
\item[$(\RD^s)$] there is $C \geq 0$ such that $\|a\|\leq C\|a\|_{2,s}$ for all $a\in\C\G$
\end{itemize}
Informally, $(\RD^s)$ is a ``de-localized'' version of $(\RD_\bullet^s)$ (recall, the latter is a property defined with reference to balls - hence the label $\bullet$). The equivalence between the two formulations of property RD is well-known. The next lemma records this equivalence in a precise fashion: 

\begin{lem}\label{laf}
We have that $(\RD^s)$ implies $(\RD^s_\bullet)$, and that $(\RD^s_\bullet)$ implies $(\RD^{s+\e})$ for all $\e>0$. 
\end{lem}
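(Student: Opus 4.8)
The plan is to prove the two implications in Lemma~\ref{laf} separately, since they go in opposite directions and require different ideas.

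The plan is to prove the two implications in Lemma~\ref{laf} separately, since they point in opposite directions of difficulty. The first, $(\RD^s) \To (\RD^s_\bullet)$, is a matter of comparing the two norms on a single ball. The second, $(\RD^s_\bullet) \To (\RD^{s+\e})$, is the substantive one, and the whole point is to recover it for \emph{every} $\e>0$.

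For the first implication, suppose $(\RD^s)$ holds with constant $C$, and let $a\in\C\G$ be supported on elements of length at most $n$. On this ball the weight $(1+|g|)^{2s}$ is dominated by $(1+n)^{2s}$, so that $\|a\|_{2,s}\leq (1+n)^s\|a\|_2$. Feeding this into $(\RD^s)$ gives $\|a\|\leq C(1+n)^s\|a\|_2$, which is exactly $(\RD^s_\bullet)$, with the same constant. This direction is immediate.

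For the converse, assume $(\RD^s_\bullet)$ with constant $C$. Given an arbitrary $a\in\C\G$, the strategy is to split it by length, estimate the operator norm of each piece by $(\RD^s_\bullet)$, reassemble via the triangle inequality, and control the resulting sum by Cauchy--Schwarz against the weight $(1+|g|)^{s+\e}$. The naive version of this — decomposing $a=\sum_n a_n$ into unit-width shells $a_n$ supported on $\{|g|=n\}$ — yields $\|a\|\leq C\sum_n (1+n)^s\|a_n\|_2$, and Cauchy--Schwarz with the splitting $(1+n)^s=(1+n)^{-\e}(1+n)^{s+\e}$ then produces the factor $\big(\sum_n (1+n)^{-2\e}\big)^{1/2}$. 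This series converges only for $\e>\tfrac12$, so unit shells are too fine; recovering the full range $\e>0$ is the main obstacle.

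The remedy, and the key idea, is a \emph{dyadic} decomposition. Group the shells into annuli $A_k$ supported on $\{g:2^k\leq 1+|g|<2^{k+1}\}$, so that $a=\sum_{k\geq 0}A_k$. Each $A_k$ is supported on a ball of radius below $2^{k+1}$, so $(\RD^s_\bullet)$ gives $\|A_k\|\leq C\,2^s\,2^{ks}\|A_k\|_2$. Summing over $k$ and applying Cauchy--Schwarz with the splitting $2^{ks}=2^{-k\e}\cdot 2^{k(s+\e)}$ now produces the factor $\big(\sum_k 2^{-2k\e}\big)^{1/2}$, a geometric series that converges for every $\e>0$ — this is precisely what the dyadic scale buys us. Finally, since $1+|g|\asymp 2^k$ on the $k$-th annulus, one checks that $\sum_k 2^{2k(s+\e)}\|A_k\|_2^2$ is comparable, up to a constant depending only on $s+\e$, to $\|a\|_{2,s+\e}^2$. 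Combining these estimates gives $\|a\|\leq C'\|a\|_{2,s+\e}$ for a constant $C'$ independent of $a$, which is $(\RD^{s+\e})$. (Note that the series $\sum_k 2^{-2k\e}$ diverges at $\e=0$, so the loss of an arbitrarily small $\e$ is intrinsic to this method.)
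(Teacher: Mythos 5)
Your proof is correct and follows essentially the same route as the paper: the first implication by bounding the weight on a ball, and the second by the identical dyadic annulus decomposition $\{2^k\leq 1+|g|<2^{k+1}\}$, applying $(\RD^s_\bullet)$ on each annulus, and closing with Cauchy--Schwarz against the geometric series $\sum_k 2^{-2k\e}$. The only difference is expository --- your discussion of why unit-width shells fail is a nice motivation the paper omits, but the core argument coincides with the paper's proof.
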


\begin{proof}
The first implication is obvious. The second implication is essentially contained in the proof of Proposition 1.2a) in \cite{Laf00}, and works as follows. Assume that $\G$ satisfies $(\RD^{s}_\bullet)$. For $n\geq 0$ we let $A_n$ be the annulus $\{g\in \G:\: 2^n-1\leq |g|<2^{n+1}-1\}$. Then for $a=\sum a_g g\in\C\G$ we have:
\begin{eqnarray*}
\big\|\sum a_g g\big\| &\leq& \sum_{n\geq 0}\bigg\|\sum_{g\in A_n} a_g g\bigg\|\stackrel{^{(\RD^{s}_\bullet)}}{\leq} C\sum_{n\geq 0}2^{s(n+1)}\bigg\|\sum_{g\in A_n} a_g g\bigg\|_2\\
&\leq&C \sum_{n\geq 0}2^{s(n+1)-(s+\e)n}\bigg\|\sum_{g\in A_n} a_g g\bigg\|_{2,s+\e}= 2^sC \sum_{n\geq 0}2^{-\e n}\bigg\|\sum_{g\in A_n} a_g g\bigg\|_{2,s+\e}\\
&\leq& 2^sC\bigg(\sum_{n\geq 0}2^{-2\e n}\bigg)^\frac{1}{2} \bigg(\sum_{n\geq 0} \bigg\|\sum_{g\in A_n} a_g g\bigg\|_{2,s+\e}^2\bigg)^\frac{1}{2}=C'(s,\e) \big\|\sum a_g g\big\|_{2,s+\e}
\end{eqnarray*}
We conclude that $\G$ satisfies $(\RD^{s+\e})$ for each $\e>0$.
\end{proof}

In general, $(\RD^s_\bullet)$ does not imply $(\RD^s)$; see the following example.

\subsection{Groups of polynomial growth}
Let $\G$ be a group of polynomial growth. Then its growth function $\gamma$ satisfies $\gamma(n) \asymp n^{d(\G)}$ for some integer $d(\G)$ called the \emph{degree of growth} of $\G$ (see \cite[VII.26 \& VII.29]{dHa00}). With this notion at hand, we can state the following explicit description:

\begin{prop}\label{poly}
Let $\G$ be a group of polynomial growth. Then:
 \begin{enumerate}
 \item $\G$ satisfies $(\RD^s_\bullet)$ if and only if $s\geq \frac{1}{2}d(\G)$;
 \item $\G$ satisfies $(\RD^s)$ if and only if $s> \frac{1}{2}d(\G)$.
\end{enumerate}
\end{prop}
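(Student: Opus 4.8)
Throughout write $d:=d(\G)$, so that the growth function obeys $\gamma(n)\asymp n^{d}$, and recall that a group of polynomial (hence subexponential) growth is amenable. Two elementary facts drive the whole argument. First, for every $a=\sum a_g\,g\in\C\G$ one has the trivial bound $\|a\|\le\|a\|_1:=\sum_g|a_g|$. Second, amenability makes this sharp on positive coefficients: since the trivial representation is weakly contained in the regular representation, $|\sum_g a_g|\le\|a\|$, so that $\|a\|=\|a\|_1$ whenever all $a_g\ge 0$. The only other input I need is the ``zeta-type'' dichotomy
\[
\sum_{g\in\G}(1+|g|)^{-t}<\infty\iff t>d,
\]
together with its quantitative companion: for $t\le d$ the partial sums $Z_t(n):=\sum_{|g|\le n}(1+|g|)^{-t}$ tend to $\infty$. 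This is the group-theoretic analogue of $\int_{\R^d}(1+|x|)^{-t}\,dx<\infty\iff t>d$, and it follows from $\gamma(n)\asymp n^{d}$ by Abel summation.

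For part (1), the forward implication is a one-line Cauchy--Schwarz estimate: if $a$ is supported on the ball $B_n=\{|g|\le n\}$ then $\|a\|\le\|a\|_1\le\sqrt{|B_n|}\,\|a\|_2=\sqrt{\gamma(n)}\,\|a\|_2\preccurlyeq(1+n)^{d/2}\|a\|_2$, so $(\RD^{s}_\bullet)$ holds as soon as $s\ge d/2$. For the converse I test on the ball itself: with $a=\sum_{|g|\le n}g$ amenability gives $\|a\|=\|a\|_1=\gamma(n)$, while $\|a\|_2=\sqrt{\gamma(n)}$, so $\|a\|/\|a\|_2=\sqrt{\gamma(n)}\asymp n^{d/2}$. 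Feeding this into $(\RD^{s}_\bullet)$ forces $n^{d/2}\preccurlyeq(1+n)^{s}$, i.e.\ $s\ge d/2$.

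Part (2) runs along the same lines but against the weighted norm. For sufficiency, if $s>d/2$ then $K^2:=\sum_g(1+|g|)^{-2s}<\infty$, and Cauchy--Schwarz applied to $|a_g|=\big(|a_g|(1+|g|)^{s}\big)(1+|g|)^{-s}$ yields $\|a\|\le\|a\|_1\le K\,\|a\|_{2,s}$ for every $a\in\C\G$; this is exactly $(\RD^{s})$ (alternatively, this follows from part (1) together with Lemma~\ref{laf}). For necessity I optimize the same Cauchy--Schwarz, which singles out the weights $a_g=(1+|g|)^{-2s}$. Taking $a^{(n)}=\sum_{|g|\le n}(1+|g|)^{-2s}\,g$ one computes $\|a^{(n)}\|_{2,s}^{2}=Z_{2s}(n)$ and, by amenability, $\|a^{(n)}\|=\|a^{(n)}\|_1=Z_{2s}(n)$; hence $\|a^{(n)}\|/\|a^{(n)}\|_{2,s}=\sqrt{Z_{2s}(n)}$. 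When $s\le d/2$ the dichotomy gives $Z_{2s}(n)\to\infty$, so $(\RD^{s})$ fails.

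The one genuinely delicate point is the boundary exponent $s=d/2$ in part (2): here I must show that the \emph{critical} sum $\sum_{|g|\le n}(1+|g|)^{-d}$ still diverges, even though each ball-supported test element gives only a bounded ratio at this exponent. Because $\gamma$ is controlled only up to multiplicative constants, a naive dyadic splitting can fail (consecutive annuli need not carry the expected mass), so I would argue by Abel summation: writing $\sigma(k)=\gamma(k)-\gamma(k-1)$ and summing by parts against the decreasing weights $(1+k)^{-d}$, the lower bound $\gamma(k)\succcurlyeq k^{d}$ produces a tail $\succcurlyeq\sum_{k\le n}k^{-1}\asymp\log n\to\infty$. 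This logarithmic divergence at the critical exponent is precisely what separates $(\RD^{d/2}_\bullet)$ from $(\RD^{d/2})$, and it is where I expect to spend the most care.
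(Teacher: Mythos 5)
Your proof is correct, and it follows the paper's strategy in its essentials --- Part (1) via the amenability identity $\|a\|=\|a\|_1$ for positive coefficients tested on $\chi(B_n)$, plus Cauchy--Schwarz for sufficiency, is exactly the paper's argument (packaged there as a ``Fact'' quantifying Jolissaint's amenable case); and your critical test family is literally the paper's: your $a^{(N)}$ at $s=\tfrac{1}{2}d(\G)$ equals the paper's $a_N=\sum_{n\leq N}(1+n)^{-d(\G)}\chi(S_n)$. The two genuine differences are in the supporting steps. First, for sufficiency in Part (2) the paper invokes Lemma~\ref{laf} (the dyadic-annuli interpolation) applied to Part (1), whereas you run a direct Cauchy--Schwarz against the summable weight $\sum_g(1+|g|)^{-2s}<\infty$; these are equivalent in effect, and you note the alternative. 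Second, and more substantively: to get the contradiction at the critical exponent the paper cites the sphere lower bound $|S_n|\succcurlyeq n^{d(\G)-1}$ from \cite[VII.32]{dHa00}, while you prove the needed divergence $\sum_{|g|\leq n}(1+|g|)^{-d(\G)}\to\infty$ by Abel summation using only the ball lower bound $\gamma(n)\succcurlyeq n^{d(\G)}$. This makes your proof more self-contained, since the pointwise sphere estimate is a nontrivial external fact, whereas the ball bound is part of the definition of $d(\G)$; your caution about naive dyadic splitting is also well placed (with only $\asymp$ control, consecutive dyadic annuli can indeed fail to carry mass $\asymp 2^{jd}$, though sparser $\lambda$-adic scales with $\lambda$ large would repair it). A final minor structural difference: you rule out $(\RD^s)$ for all $s\leq\tfrac{1}{2}d(\G)$ uniformly with one family, while the paper disposes of $s<\tfrac{1}{2}d(\G)$ via Part (1) and Lemma~\ref{laf} and only treats the critical value directly; both organizations are fine.
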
  

\begin{proof} The following fact quantifies a result of Jolissaint \cite[Cor.3.1.8]{Jol90} saying that the amenable groups which enjoy property RD are precisely the polynomial growth groups:

\smallskip
\noindent\textbf{Fact.} Let $\G$ be amenable, with growth function $\gamma$. Then $\G$ satisfies $(\RD^s_\bullet)$ if and only if $\gamma(n)\preccurlyeq n^{2s}$.

\begin{myindentpar}{.5cm}
\emph{Proof:} If $\G$ satisfies $(\RD^s_\bullet)$ then $\|\chi(B_n)\|\leq C (1+n)^s\sqrt{\gamma(n)}$, where $\chi(B_n)$ is the characteristic function of the $n$-ball of $\G$. Since $\G$ is amenable, we have that $\|a\|=\|a\|_1$ for all $a\in\C\G$ with positive coefficients; for $\chi(B_n)$, we get $\|\chi(B_n)\|=\gamma(n)$. It follows that $\gamma(n)\leq C^2 (1+n)^{2s}$.

Conversely, assume that $\gamma(n)\leq C n^{2s}$ for some $C>0$. For $a\in\C\G$ supported on elements of length at most $n$, we have $\|a\|\leq \|a\|_1\leq \sqrt{\gamma(n)}\|a\|_2\leq \sqrt{C}(1+n)^s\|a\|_2$. Thus $\G$ satisfies $(\RD^s_\bullet)$.  
\end{myindentpar}

\smallskip
\noindent The above fact yields Part (1). Most of Part (2) follows by combining Part (1) with Lemma~\ref{laf}; the only thing left to check is that $\G$ does not satisfy $(\RD^s)$ at the critical value $s=\frac{1}{2}d(\G)$. Arguing by contradiction, let us assume that $\G$ satisfies $(\RD^s)$ for $s=\frac{1}{2}d(\G)$. Fix an integer $N\geq 0$ and consider 
\[a_N=\sum_{1\leq n\leq N} (1+n)^{-d(\G)}\chi(S_n)\in\C\G\]
where $\chi(S_n)$ denotes the characteristic function of the $n$-sphere of $\G$. We have $\|a_N\|\leq C\|a_N\|_{2,s}$ where $C\geq 0$ is independent of $N$. Since
\[\|a_N\|=\sum_{1\leq n\leq N} (1+n)^{-d(\G)}|S_n|, \qquad \|a_N\|_{2,s}=\sqrt{\sum_{1\leq n\leq N} (1+n)^{-d(\G)}|S_n|}\]
we infer that $\sum (1+n)^{-d(\G)}|S_n|$ converges. This is absurd, since $|S_n|\succcurlyeq n^{d(\G)-1}$ (see \cite[VII.32]{dHa00}).
\end{proof}

\subsection{Heredity} We make the following observation, whose easy proof is left to the reader.

\begin{prop}\label{her} Let $\G'$ be a subgroup of $\G$.
\begin{enumerate}
\item If $\G$ satisfies $(\RD_\bullet^s)$ then $\G'$ satisfies $(\RD_\bullet^s)$;
\item Assume that $\G'$ has finite index in $\G$. Then $\G$ satisfies $(\RD_\bullet^s)$ if and only if $\G'$ satisfies $(\RD_\bullet^s)$.
\end{enumerate}
\end{prop}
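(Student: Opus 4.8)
The plan is to reduce both parts to one structural fact about the regular representation, after which everything else is routine comparison of word-lengths. The crucial observation is that for $a\in\C\G'$ the operator norm of $a$ computed inside $\G$ agrees with the operator norm computed inside $\G'$. Indeed, decomposing $\G$ into right cosets $\G=\bigsqcup_i \G' t_i$, the space $\ell^2\G$ splits as an orthogonal direct sum $\bigoplus_i \ell^2(\G' t_i)$ of subspaces invariant under the left action of $\G'$, and the map $g t_i\mapsto g$ identifies each summand, as a representation of $\G'$, with the left regular representation on $\ell^2\G'$. Hence $\lambda_\G(a)$ is a direct sum of copies of $\lambda_{\G'}(a)$, so the two operator norms coincide; the $\ell^2$-norm of $a$ is manifestly the same whether $a$ is viewed in $\C\G'$ or in $\C\G$. (This is just the isometric embedding $\Cred\G'\into\Cred\G$.)

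For Part (1), I would fix finite generating sets (here $\G'$ is tacitly finitely generated, as is required for the property to make sense) and note that for $g\in\G'$ one always has $|g|_\G\leq K|g|_{\G'}$, since each $\G'$-generator is a fixed $\G$-word. Thus if $a\in\C\G'$ is supported on elements of $\G'$-length at most $n$, it is supported on elements of $\G$-length at most $Kn$. Applying $(\RD_\bullet^s)$ for $\G$ together with the norm identity of the first paragraph gives $\|a\|\leq C(1+Kn)^s\|a\|_2\leq C'(1+n)^s\|a\|_2$, which is $(\RD_\bullet^s)$ for $\G'$.

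For the converse in Part (2), let $m=[\G:\G']<\infty$ and fix coset representatives $t_1,\dots,t_m$. Given $a\in\C\G$ supported on the $n$-ball, I would split it along cosets as $a=\sum_j b_j t_j$ with $b_j\in\C\G'$, and estimate $\|a\|\leq\sum_j\|b_j t_j\|=\sum_j\|b_j\|$, using that right translation by $t_j$ is unitary on $\ell^2\G$ and then the norm identity above. The constraint $|g|_\G\leq n$ together with the finite-index comparison $|h|_{\G'}\leq A|h|_\G+B$ for $h\in\G'$ (valid because a finite-index subgroup is quasi-isometric to the ambient group via inclusion) shows each $b_j$ is supported on $\G'$-length at most $A'(1+n)$. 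Applying $(\RD_\bullet^s)$ for $\G'$ to each $b_j$, and noting $\|b_j\|_2\leq\|a\|_2$ since the $b_j t_j$ have pairwise disjoint supports, yields $\|a\|\leq mC''(1+n)^s\|a\|_2$, as desired. Combined with Part (1) this gives the equivalence.

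The one genuinely substantive input is the norm-compatibility fact of the first paragraph; it is standard but is what makes both implications work. Everything else is bookkeeping with the triangle inequality, orthogonality of disjointly supported elements, and the quasi-isometry between a finite-index subgroup and its ambient group. I expect the only place needing a moment's care is the two-sided comparability of word-lengths used in Part (2), which I would simply attribute to the standard Milnor--\v{S}varc quasi-isometry.
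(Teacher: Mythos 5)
Your proof is correct. The paper in fact leaves this proposition unproved (``whose easy proof is left to the reader''), and your argument supplies exactly the standard one the author has in mind: the isometric embedding $\Cred\G'\into\Cred\G$ via the coset decomposition of $\ell^2\G$, the comparability of word-lengths (two-sided in the finite-index case), and for the converse in Part (2) the splitting of $a\in\C\G$ into finitely many coset pieces handled by the triangle inequality and orthogonality. No gaps beyond harmless bookkeeping (e.g.\ rounding $A'(1+n)$ to an integer before invoking $(\RD_\bullet^s)$ for $\G'$).
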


This proposition also holds for $(\RD^s)$ instead of $(\RD_\bullet^s)$.

\section{Proof of Main Theorem}\label{infinite}
This section contains the proof of our main result. We start off with two lemmas which are free of any RD assumption. In what follows, products of functions on $\G$ are convolution products, and inequalities are in the pointwise sense.

\begin{lem}\label{one}
For all $n,k\geq 1$ we have $\chi(B_n)\chi(B_{n+k})\geq |B_n|\chi(B_k)$.
\end{lem}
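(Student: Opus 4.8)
The plan is to unwind the convolution into a counting statement and then apply the triangle inequality for the word-length. Recall that for $a,b\in\C\G$ the convolution product is $(a*b)(x)=\sum_{g\in\G}a(g)\,b(g^{-1}x)$, so evaluating the left-hand side at a point $x\in\G$ gives
\[
\big(\chi(B_n)\chi(B_{n+k})\big)(x)=\sum_{g\in\G}\chi(B_n)(g)\,\chi(B_{n+k})(g^{-1}x)=\#\{g\in B_n:\, g^{-1}x\in B_{n+k}\}.
\]
In words, the value at $x$ counts the elements $g$ of the $n$-ball for which $g^{-1}x$ lands in the $(n+k)$-ball. The whole lemma amounts to bounding this count from below by $|B_n|$ exactly when $x\in B_k$.

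First I would dispose of the points $x\notin B_k$. There the right-hand side $|B_n|\,\chi(B_k)(x)$ vanishes, while the left-hand side is a sum of nonnegative terms, since characteristic functions are nonnegative. Hence the desired inequality holds trivially at such $x$, with no further work.

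The heart of the matter is the case $x\in B_k$, i.e.\ $|x|\leq k$. Here I claim that \emph{every} $g\in B_n$ contributes to the count above, which forces the count to equal $|B_n|$ on the nose. Indeed, for $g\in B_n$ we have $|g|\leq n$, and since word-length is symmetric and subadditive,
\[
|g^{-1}x|\leq |g^{-1}|+|x|=|g|+|x|\leq n+k,
\]
so $g^{-1}x\in B_{n+k}$. Thus the counting set is all of $B_n$, giving $\big(\chi(B_n)\chi(B_{n+k})\big)(x)=|B_n|=|B_n|\,\chi(B_k)(x)$, and in particular the stated inequality holds (in fact with equality) at such $x$.

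I do not expect any genuine obstacle: the lemma is a direct consequence of the triangle inequality $|g^{-1}x|\leq|g|+|x|$, packaged through the definition of convolution. The only points demanding minor care are the direction of the inequality and the bookkeeping of which ball each factor lives in; no estimate beyond the triangle inequality is involved.
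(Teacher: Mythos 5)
Your proof is correct and is essentially the paper's own argument, just written out in full: the paper's one-line proof also observes that for each $h\in B_k$, every $g\in B_n$ satisfies $g^{-1}h\in B_{n+k}$ (the triangle inequality), so the convolution coefficient at $h$ is at least $|B_n|$. Your additional remarks (the trivial case $x\notin B_k$ and the fact that equality actually holds on $B_k$) are accurate but not needed beyond what the paper records.
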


\begin{proof} The coefficient of each $h\in B_k$ is at least $|B_n|$, since $g^{-1}h\in B_{n+k}$ whenever $g\in B_n$.
\end{proof}

For $r\geq 1$ and $\alpha >0$, consider the following formal sum:
\[Z_r(\alpha)=\sum_{k\geq 1} k^{-\alpha}\;\frac{\chi(B_{rk})}{\|\chi(B_{rk})\|_2}\] 

\begin{lem}\label{two}
Let $\alpha, \beta>0$ with $\alpha+\beta >1$. Then $Z_r(\alpha)Z_r(\beta)\geq\frac{1}{\alpha+\beta-1}\; Z_r(\alpha+\beta-1)$.
\end{lem}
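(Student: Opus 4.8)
The plan is to expand the convolution product into a double series, bound each ball-convolution from below using Lemma~\ref{one}, and then recognize the resulting lower bound as a multiple of $Z_r(\alpha+\beta-1)$. Concretely, I would first write
\[
Z_r(\alpha)Z_r(\beta)=\sum_{j,k\geq 1}\frac{j^{-\alpha}k^{-\beta}}{\|\chi(B_{rj})\|_2\,\|\chi(B_{rk})\|_2}\;\chi(B_{rj})\chi(B_{rk}),
\]
using $\|\chi(B_{rk})\|_2=\sqrt{|B_{rk}|}$. For a pair with $j\leq k$, Lemma~\ref{one} (applied with $n=rj$ and increment $r(k-j)$) gives $\chi(B_{rj})\chi(B_{rk})\geq |B_{rj}|\,\chi(B_{r(k-j)})$, and the opposite ordering is handled symmetrically; every pair thus contributes a nonnegative multiple of $\chi(B_{r|j-k|})$, and collecting terms according to $m=|j-k|$ turns the lower bound into $\sum_{m\geq 0} c_m\,\chi(B_{rm})$ with explicit coefficients $c_m$.

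The device that makes the normalization manageable is the submultiplicativity $|B_{a+b}|\leq |B_a|\,|B_b|$, which is immediate from $B_{a+b}=B_aB_b$ (split a geodesic word at length $a$). Comparing $|B_{r\min(j,k)}|/\sqrt{|B_{rj}||B_{rk}|}$ with $1/\sqrt{|B_{r|j-k|}|}$ via this inequality absorbs the group-dependent ball sizes and presents the bound in the same normalized form $\sum_m c_m'\,\chi(B_{rm})/\|\chi(B_{rm})\|_2$ as $Z_r(\alpha+\beta-1)=\sum_m m^{-(\alpha+\beta-1)}\,\chi(B_{rm})/\|\chi(B_{rm})\|_2$. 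Matching the two reduces everything to the purely numerical estimate
\[
\sum_{j\geq 1} j^{-\alpha}(j+m)^{-\beta}\ \geq\ \frac{1}{\alpha+\beta-1}\,m^{1-\alpha-\beta}\qquad(m\geq 1),
\]
which I would attack by comparison with $\int_m^\infty t^{-(\alpha+\beta)}\,dt=\tfrac{1}{\alpha+\beta-1}m^{1-\alpha-\beta}$, exploiting monotonicity of $t\mapsto t^{-\alpha}(t+m)^{-\beta}$ together with $j^{-\alpha}\geq (j+m)^{-\alpha}$.

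The hard part will be producing exactly the constant $\tfrac{1}{\alpha+\beta-1}$ rather than a lossier one. Two subtleties need attention. First, since the balls $B_{rm}$ are nested, the target pointwise inequality is genuinely a comparison of tail sums $\sum_{m\geq m_0}(\cdots)$, not of individual coefficients; a term-by-term comparison is too weak for small $m$ (near the critical exponents $\alpha+\beta$ close to $1$), so the argument must use the summation, and one may have to retain the near-diagonal contributions that Lemma~\ref{one} discards at points $g\neq e$. Second, the $\min$-ball normalization must be controlled uniformly over the group, which is precisely the role of $|B_{a+b}|\le|B_a||B_b|$. I expect the sharp integral comparison, calibrated to yield the exact constant $\tfrac{1}{\alpha+\beta-1}$ uniformly in $m$ and in the group, to be the main technical hurdle.
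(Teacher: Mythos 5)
Your proposal is, in substance, the paper's own proof: the same expansion of $Z_r(\alpha)Z_r(\beta)$ into a double sum, the same use of Lemma~\ref{one} combined with the submultiplicativity $|B_{a+b}|\leq|B_a|\,|B_b|$ to produce the normalized pointwise bound
\[
\frac{\chi(B_{rk})}{\|\chi(B_{rk})\|_2}\,\frac{\chi(B_{r(j+k)})}{\|\chi(B_{r(j+k)})\|_2}\;\geq\;\frac{\chi(B_{rj})}{\|\chi(B_{rj})\|_2},
\]
and the same reduction to the numerical estimate $\sum_{k\geq 1}k^{-\alpha}(j+k)^{-\beta}\geq \frac{1}{\alpha+\beta-1}j^{1-\alpha-\beta}$. (The paper keeps only the ordered pairs $(k,j+k)$ and discards the rest, rather than collecting both orderings by $|j-k|$ as you do; since all summands are nonnegative this difference is immaterial, though note that for the reversed order the mirror statement of Lemma~\ref{one} needs its own one-line check, or those terms can simply be dropped.)

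Two comments on the ``subtleties'' in your last paragraph. First, the nestedness/tail-sum worry is a non-issue: because every term $\chi(B_{rm})/\|\chi(B_{rm})\|_2$ is a nonnegative function, coefficient-by-coefficient domination already implies the pointwise inequality, and that is exactly what the paper does; tail sums would only be needed if one insisted on a constant so sharp that termwise comparison fails. Second, your suspicion about the sharp constant is well founded --- in fact the paper does not legitimately obtain it either. Its comparison $\sum_{k\geq 1}(j+k)^{-(\alpha+\beta)}>\int_j^\infty x^{-(\alpha+\beta)}\,dx$ is backwards: for a decreasing integrand, $\sum_{k\geq 1}f(j+k)$ is bounded below by $\int_{j+1}^\infty f$ and is actually \emph{smaller} than $\int_j^\infty f$ (try $j=1$, $\alpha+\beta=2$: $\pi^2/6-1<1$). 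The honest output of your route (and of the paper's) is the coefficient bound $\frac{(j+1)^{1-\alpha-\beta}}{\alpha+\beta-1}\geq \frac{2^{1-\alpha-\beta}}{\alpha+\beta-1}\,j^{1-\alpha-\beta}$, i.e., the lemma with constant $\frac{2^{1-\alpha-\beta}}{\alpha+\beta-1}$ in place of $\frac{1}{\alpha+\beta-1}$. Do not spend effort chasing the exact constant: the only feature used in the proof of Theorem~\ref{main} is that $Z_r(\alpha)Z_r(\beta)\geq c\,Z_r(\alpha+\beta-1)$ pointwise for \emph{some} $c=c(\alpha,\beta)>0$, so the lossy constant serves exactly as well.
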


\begin{proof} Lemma~\ref{one}, together with the obvious inequality $\|\chi(B_{r(j+k)})\|_2\leq \|\chi(B_{rj})\|_2\|\chi(B_{rk})\|_2$, give 
\[\frac{\chi(B_{rk})}{\|\chi(B_{rk})\|_2}\;\frac{\chi(B_{r(j+k)})}{\|\chi(B_{r(j+k)})\|_2}\geq \frac{\|\chi(B_{rk})\|^2_2\;\chi(B_{rj})}{\|\chi(B_{rk})\|_2\|\chi(B_{r(j+k)})\|_2}\geq \frac{\chi(B_{rj})}{\|\chi(B_{rj})\|_2}\]
for all $j,k\geq 1$.
Therefore:
\begin{eqnarray*}
Z_r(\alpha)Z_r(\beta)
&\geq&  \sum_{j\geq 1} \sum_{k\geq 1} k^{-\alpha}(j+k)^{-\beta}\frac{\chi(B_{rk})}{\|\chi(B_{rk})\|_2}\;\frac{\chi(B_{r(j+k)})}{\|\chi(B_{r(j+k)})\|_2}\\
&\geq& \sum_{j\geq 1} \Big(\sum_{k\geq 1} (j+k)^{-\alpha-\beta}\Big)\frac{\chi(B_{rj})}{\|\chi(B_{rj})\|_2}
\end{eqnarray*}
Since for each $j \geq 1$ we have 
\[\sum_{k\geq 1} (j+k)^{-(\alpha+\beta)}>\int_j^\infty x^{-(\alpha+\beta)}\: dx=\frac{j^{-(\alpha+\beta-1)}}{\alpha+\beta-1},\]
the desired inequality follows. \end{proof}

We now come to the proof of

\begin{thm}\label{main} If $\G$ is an infinite group satisfying $(\RD^s_\bullet)$, then $s \geq \frac{1}{2}$. 
\end{thm}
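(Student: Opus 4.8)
The plan is to play two estimates for the truncated sums $Z_{r,M}(\alpha):=\sum_{k=1}^M k^{-\alpha}\,\chi(B_{rk})/\|\chi(B_{rk})\|_2$ against each other. First I would record the only two facts about the operator norm that I need: $\|a\|\ge\|a\|_2$ for every $a\in\C\G$ (test the regular representation on $\delta_e$), and $\|a\|\ge\|b\|$ whenever $a\ge b\ge 0$ have non-negative coefficients (for $a\ge 0$ the supremum defining $\|a\|$ is attained on non-negative vectors, and convolution preserves the pointwise order). Writing $f_k=\chi(B_{rk})/\|\chi(B_{rk})\|_2$, the $f_k$ are non-negative, of unit $\ell^2$-norm, and have pairwise non-negative inner products, so $\|Z_{r,M}(\alpha)\|_2^2\ge\sum_{k=1}^M k^{-2\alpha}$. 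This already pinpoints the exponent $\tfrac12$: the right-hand side diverges exactly when $\alpha\le\tfrac12$, so $Z_r(\alpha)$ fails to be $\ell^2$, let alone bounded, for $\alpha\le\tfrac12$.

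Next I would feed this divergence into $(\RD^s_\bullet)$. Since $Z_{r,M}(\alpha)$ is supported in $B_{rM}$, the hypothesis gives the upper bound $\|Z_{r,M}(\alpha)\|\le C(1+rM)^s\,\|Z_{r,M}(\alpha)\|_2$. To manufacture a matching lower bound at an exponent where the $\ell^2$-mass is large, I use Lemma~\ref{two} in its self-paired form, a truncated version of $Z_{r,M}(\alpha)^2\ge\frac{1}{2\alpha-1}\,Z_{r,\lfloor M/2\rfloor}(2\alpha-1)$, which for $\alpha<\tfrac34$ lands on an exponent $2\alpha-1<\tfrac12$ whose $\ell^2$-norm grows in $M$. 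Combining the two yields $(1+rM)^{2s}\,\|Z_{r,M}(\alpha)\|_2^2\succcurlyeq\|Z_{r,\lfloor M/2\rfloor}(2\alpha-1)\|_2$; letting $M\to\infty$ with $\alpha\downarrow\tfrac12$ then forces a relation between $s$ and $\tfrac12$. The mechanism (this is where Lemma~\ref{one} enters, through Lemma~\ref{two}) is that the self-product trades an exponent $\alpha$ just above $\tfrac12$ for an exponent just above $0$, whose $\ell^2$-divergence is as strong as possible, while $(\RD^s_\bullet)$ only charges the support radius $rM$ at rate $s$.

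The hard part is extracting the \emph{sharp} constant $\tfrac12$ rather than a merely positive bound. The difficulty is that the crude estimate $\|Z_{r,M}(\alpha)\|_2\le\sum_{k\le M}k^{-\alpha}$ is tight only when the balls are strongly nested (the polynomial-growth regime), whereas the diagonal bound $\|Z_{r,M}(\alpha)\|_2\asymp 1$ is tight when the balls grow fast, and there a single squaring costs a factor that a naive balance degrades to $s\ge\tfrac14$. To close this gap I expect to iterate the full inequality $Z_r(\alpha)Z_r(\beta)\ge\frac{1}{\alpha+\beta-1}Z_r(\alpha+\beta-1)$ along a cascade of exponents descending to the threshold, summing the contributions in the spirit of Saeki's argument, and to optimize jointly over $\alpha$, the radius $r$, and the truncation $M$ (the boundary exponent $\alpha=\tfrac12$, where the divergence is only logarithmic, will need separate care). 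Finally, this $Z_r$-machinery is only needed for infinite torsion groups: if $\G$ contains an element of infinite order, then $\Z\le\G$ and Proposition~\ref{her}(1) together with Proposition~\ref{poly}(1) (growth degree $1$) already give $s\ge\tfrac12$, while infinite amenable groups are covered by the Fact inside the proof of Proposition~\ref{poly}, since their growth satisfies $\gamma(n)\succcurlyeq n$ and hence $(\RD^s_\bullet)$ forces $2s\ge 1$.
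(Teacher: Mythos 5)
Your proposal assembles the right objects --- the sums $Z_r(\alpha)$, the diagonal lower bound $\|Z_{r,M}(\alpha)\|_2^2\geq\sum_{k\leq M}k^{-2\alpha}$, the self-paired use of Lemma~\ref{two}, and the truncation trick that lets you apply $(\RD^s_\bullet)$ directly instead of passing to $(\RD^t)$ via Lemma~\ref{laf} --- and this is indeed the paper's contradiction scheme. But it stops exactly at the step that makes the scheme work, and the repair you sketch would not succeed. Your combined inequality $(1+rM)^{s}\,\|Z_{r,M}(\alpha)\|_2^2\succcurlyeq\|Z_{r,\lfloor M/2\rfloor}(2\alpha-1)\|_2$ yields a contradiction only if $\|Z_{r,M}(\alpha)\|_2^2$ can be bounded \emph{above} by roughly $\sum_{k\leq M}k^{-2\alpha}$, i.e.\ only if the normalized off-diagonal inner products $\|\chi(B_{rk})\|_2/\|\chi(B_{rl})\|_2$ (for $k\leq l$) decay geometrically. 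In the regime where these ratios stay near $1$, the left-hand side is of order $(rM)^sM$ (for $\alpha$ near $\tfrac12$) while the right-hand side is at most $\sum_{k\leq M/2}k^{-(2\alpha-1)}\approx M$, so the inequality is vacuous for \emph{every} $s\geq0$ --- not merely degraded to $s\geq\tfrac14$. Iterating Lemma~\ref{two} down a cascade of exponents cannot fix this: iteration only modifies the right-hand side (lowering its exponent), whereas the obstruction is the missing upper bound on the left-hand side, and optimizing over $\alpha$, $r$, $M$ does not create that bound either.

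The missing idea --- the paper's first and decisive move --- is that the contradiction hypothesis $s<\tfrac12$ itself destroys the bad regime. Apply Lemma~\ref{one} (with $n=r$ and $k$ replaced by $rk$) and then $(\RD^s_\bullet)$ to the factor $\chi(B_r)$:
\[\|\chi(B_r)\|_2^2\,\|\chi(B_{rk})\|_2=\big\||B_r|\chi(B_{rk})\big\|_2\leq\|\chi(B_r)\chi(B_{r(k+1)})\|_2\leq C(1+r)^s\|\chi(B_r)\|_2\,\|\chi(B_{r(k+1)})\|_2,\]
so $\|\chi(B_{r(k+1)})\|_2\geq\frac{\|\chi(B_r)\|_2}{C(1+r)^s}\|\chi(B_{rk})\|_2$ for all $r,k\geq1$. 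Since $\G$ is infinite, $|B_r|\geq r+1$, hence $\|\chi(B_r)\|_2\geq(1+r)^{1/2}$, and --- this is the only place $s<\tfrac12$ is used --- one can fix $r$ with $\|\chi(B_r)\|_2\geq 2C(1+r)^s$, forcing $\|\chi(B_{r(k+1)})\|_2\geq2\|\chi(B_{rk})\|_2$ for all $k$. This doubling makes the cross terms geometrically summable, gives $\|Z_r(\alpha)\|_2^2\asymp\sum k^{-2\alpha}$ with uniform constants, and thus places you automatically in the ``fast-growth'' regime where your own computation closes and yields $s\geq\tfrac12$. Note finally that your closing case analysis does not rescue the proposal: infinite-order elements cover non-torsion groups and the Fact covers amenable ones, but the remaining case --- infinite \emph{non-amenable torsion} groups (free Burnside groups, Tarski monsters) --- is exactly the case the theorem is about, and the case your main argument was supposed to treat. (Non-amenability does imply a F\o lner-type ratio bound $|B_{n+1}|\geq(1+c)|B_n|$, which would also produce the doubling for large $r$, so an amenable/non-amenable dichotomy could be made into a complete, genuinely different proof; but you did not make that connection, and the paper's choice-of-$r$ argument renders it unnecessary.)
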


\begin{proof}
Assume, by way of contradiction, that $\G$ satisfies $(\RD^s_\bullet)$ for some $s\in[0,\frac{1}{2})$. Using Lemma~\ref{one} and $(\RD^s_\bullet)$, we have
\begin{eqnarray*}
\|\chi(B_r)\|_2^2\|\chi(B_{rk})\|_2&=&\||B_r|\chi(B_{rk})\|_2\leq \|\chi(B_r)\chi(B_{r(k+1)})\|_2\\
&\leq&\|\chi(B_r)\|\|\chi(B_{r(k+1)})\|_2 \leq C (1+r)^s\|\chi(B_r)\|_2\|\chi(B_{r(k+1)})\|_2
\end{eqnarray*}
that is
\[\|\chi(B_{r(k+1)})\|_2\geq \frac{\|\chi(B_r)\|_2}{C (1+r)^s}\|\chi(B_{rk})\|_2\]
for all $r,k\geq 1$. Fix $r\geq 1$ such that $\|\chi(B_r)\|_2\geq 2C (1+r)^s$ - a choice made possible by the fact that the volume growth of $\G$ is at least linear. Then $\|\chi(B_{r(k+1)})\|_2\geq 2\|\chi(B_{rk})\|_2$ for all $k\geq 1$.

Next, we claim that $Z_r(\alpha)\in\ell^2\G$ if and only if $\alpha>\frac{1}{2}$. To show this, we compare $\|Z_r(\alpha)\|_2^2$ against $\sum k^{-2\alpha}$. One bound holds in general: 
\[\|Z_r(\alpha)\|_2^2\geq\sum_{k=l\geq 1} k^{-\alpha}l^{-\alpha}\Big\langle \frac{\chi(B_{rk})}{\|\chi(B_{rk})\|_2}, \frac{\chi(B_{rl})}{\|\chi(B_{rl})\|_2}\Big\rangle=\sum_{k\geq 1} k^{-2\alpha}\]
For the other bound, we write:
\[\|Z_r(\alpha)\|_2^2\leq 2\sum_{l\geq k\geq 1} k^{-\alpha}l^{-\alpha}\Big\langle \frac{\chi(B_{rk})}{\|\chi(B_{rk})\|_2}, \frac{\chi(B_{rl})}{\|\chi(B_{rl})\|_2}\Big\rangle=2\sum_{k\geq 1} \bigg(\sum_{l\geq k}\frac{l^{-\alpha}}{\|\chi(B_{rl})\|_2}\bigg) k^{-\alpha}\|\chi(B_{rk})\|_2\]
From $\|\chi(B_{r(l+1)})\|_2\geq 2\|\chi(B_{rl})\|_2$ we deduce that 
\[\frac{(l+1)^{-\alpha}}{\|\chi(B_{r(l+1)})\|_2}<\frac{1}{2}\frac{l^{-\alpha}}{\|\chi(B_{rl})\|_2}\] 
hence
\[\sum_{l\geq k}\frac{l^{-\alpha}}{\|\chi(B_{rl})\|_2}< 2 \frac{k^{-\alpha}}{\|\chi(B_{rk})\|_2}\] 
which, in turn, implies $\displaystyle \|Z_r(\alpha)\|_2^2<4 \sum_{k\geq 1} k^{-2\alpha}$. The claim is proved.

Pick $t$ such that $s<t<\frac{1}{2}$. Also, pick $\alpha, \beta$ such that the following are satisfied:
\[\alpha>\frac{1}{2}+t, \qquad \beta>\frac{1}{2}, \qquad \alpha+\beta -1\leq \frac{1}{2} \]
On one hand, $Z_r(\alpha+\beta-1)$ is not in $\ell^2\G$. On the other hand, $Z_r(\alpha-t)$ and $Z_r(\beta)$ are in $\ell^2\G$. From 
\[\|Z_r(\alpha)\|_{2,t}\leq \bigg\|\sum_{k\geq 1} k^{-\alpha}(1+rk)^t\frac{\chi(B_{rk})}{\|\chi(B_{rk})\|_2}\bigg\|_2\leq \bigg\|\sum_{k\geq 1} k^{-\alpha}(2rk)^t\frac{\chi(B_{rk})}{\|\chi(B_{rk})\|_2}\bigg\|_2=(2r)^t\|Z_r(\alpha-t)\|_2\]
and the fact that $(\RD^t)$ holds (Lemma~\ref{laf}), we deduce that $Z_r(\alpha)$ is a bounded operator on $\ell^2\G$. Then $Z_r(\alpha)Z_r(\beta)$ is in $\ell^2\G$, so $Z_r(\alpha+\beta-1)$ is in $\ell^2\G$ by Lemma~\ref{two}. This contradiction ends the proof. \end{proof}

\section{Final remarks}
According to Lemma~\ref{laf}, we have $\inf \{s:\; \textrm{$\G$ satisfies $(\RD^s)$}\}=\inf \{s:\; \textrm{$\G$ satisfies $(\RD^s_\bullet)$}\}$; we denote this quantity by  $\rd(\G)$ and we call it the \emph{RD-degree} of $\G$. By definition, $\rd(\G)$ is finite precisely when $\G$ has property RD. Observe that $\rd(\G)$ is independent of the choice of word-length for $\G$.

In terms of the RD-degree, our discussion can be summarized as follows:

$\cdot$ if $\G$ is infinite then $\rd(\G)\geq \frac{1}{2}$; if $\G$ is finite then $\rd(\G)=0$

$\cdot$ if $\G$ has polynomial growth then $\rd(\G)=\frac{1}{2}d(\G)$, where $d(\G)$ denotes the growth degree of $\G$

$\cdot$ if $\G'$ is a subgroup of $\G$ then $\rd(\G')\leq\rd(\G)$

$\cdot$ if $\G'$ and $\G$ are commensurable then $\rd(\G')=\rd(\G)$

\noindent These properties suggest that $2\: \rd(\cdot)$ can be thought of as a dimension function on finitely generated groups. Computing the RD-degree of other groups would be, of course, interesting in this regard. We single out the following 
\begin{prob} Compute $\rd(\mathrm{F}_r)$, where $\mathrm{F}_r$ denotes the free group of rank $r\geq 2$.
\end{prob}
Note that the answer is independent of $r$. It is known that $1\leq \rd(\mathrm{F}_r)\leq \frac{3}{2}$ (Haagerup's estimates from \cite{Haa79} yield the upper bound; the lower bound is a consequence of Cohen's computations from \cite{Coh82}).

We close by reminding the reader that the following question is open: are there infinite, finitely generated torsion groups which enjoy property RD?


\begin{thebibliography}{00}
\bibitem{BM08} J.A. Behrstock, Y.N. Minsky: \emph{Centroids and the Rapid Decay property in mapping class groups}, arXiv:0810.1969

\bibitem{CR05} I. Chatterji, K. Ruane: \emph{Some geometric groups with rapid decay}, Geom. Funct. Anal. 15 (2005), no. 2, 311--339

\bibitem{Coh82} J.M. Cohen: \emph{Operator norms on free groups}, Boll. Un. Mat. Ital. B (6) 1 (1982), no. 3, 1055--1065

\bibitem{CM90} A. Connes, H. Moscovici: \emph{Cyclic cohomology, the Novikov conjecture and hyperbolic groups}, Topology 29 (1990), no. 3, 345--388

\bibitem{dHa88} P. de la Harpe: \emph{Groupes hyperboliques, alg\`{e}bres d'op\'{e}rateurs et un th\'{e}or\`{e}me de Jolissaint}, C. R. Acad. Sci. Paris S\'{e}r. I Math. 307 (1988), no. 14, 771--774

\bibitem{dHa00} P. de la Harpe: \emph{Topics in geometric group theory}, Chicago Lectures in Mathematics, University of Chicago Press 2000

\bibitem{DS05} C. Dru{\c{t}}u, M. Sapir: \emph{Relatively hyperbolic groups with rapid decay property}, Int. Math. Res. Not. (2005), no. 19, 1181--1194

\bibitem{DdH99} K.J. Dykema, P. de la Harpe: \emph{Some groups whose reduced $C\sp{*}$-algebras have stable rank one}, J. Math. Pures Appl. (9) 78 (1999), no. 6, 591--608

\bibitem{Haa79} U. Haagerup: \emph{An example of a nonnuclear $C\sp{*}$-algebra, which has the metric approximation property}, Invent. Math. 50 (1978/79), no. 3, 279--293

\bibitem{Jol90} P. Jolissaint: \emph{Rapidly decreasing functions in reduced $C\sp{*}$-algebras of groups}, Trans. Amer. Math. Soc. 317 (1990), no. 1, 167--196

\bibitem{Laf00} V. Lafforgue: \emph{A proof of property (RD) for cocompact lattices of ${\rm SL}(3,\R)$ and ${\rm SL}(3,\C)$}, J. Lie Theory 10 (2000), no. 2, 255--267

\bibitem{Laf02} V. Lafforgue: \emph{$K$-th\'{e}orie bivariante pour les alg\`{e}bres de Banach et conjecture de Baum-Connes}, Invent. Math. 149 (2002), no. 1, 1--95

\bibitem{MY02} I. Mineyev, G. Yu: \emph{The Baum-Connes conjecture for hyperbolic groups}, Invent. Math. 149 (2002), no. 1, 97--122

\bibitem{Raj63} M. Rajagopalan: \emph{On the $L^p$-space of a locally compact group}, Colloq. Math. 10 (1963), 49--52

\bibitem{RRS98} J. Ramagge, G. Robertson, T. Steger: \emph{A {H}aagerup inequality for {$\widetilde A\sb 1\times\widetilde A\sb 1$} and {$\widetilde A\sb 2$} buildings}, Geom. Funct. Anal. 8 (1998), no. 4, 702--731

\bibitem{Sae90} S. Saeki: \emph{The $L\sp p$-conjecture and Young's inequality}, Illinois J. Math. 34 (1990), no. 3, 614--627

\end{thebibliography}
\end{document}